\renewcommand{\(}{\left(}
\newtheorem{thm}{Theorem}
\newtheorem{lemma}{Lemma}
\newtheorem{cor}{Corollary}
\numberwithin{equation}{section}
\title{On the integral representations for   Dunkl  kernels of type $A_{2}$.}
\author{B\'{e}chir Amri}
\date{\small
Universit\'{e} Tunis El Manar\\
Facult\'{e} des Sciences de Tunis\\
D\'{e}partement de Math\'{e}matiques\\
Laboratoire d'Analyse Math\'{e}matiques et Applications LR11ES11\\
 2092- El Manar I, Tunis TUNISIA \\ bechir.amri@ipeit.rnu.tn }
\begin{document}
\maketitle
\begin{abstract}
We give an explicit integral formula for the Dunkl kernel associated to  root system of type $A_2$ and   parameter $k>0$, by  exploiting   recent result in \cite{BA1}.
\footnote{\par\textbf{Key words and phrases:}  Dunkl operators, root systems, reflection groups.
\par\textbf{2010 Mathematics Subject Classification:} Primary 33E30; Secondary 17B22,20F55.}

\end{abstract}
\section{Introduction}
 In this paper we mainly focus on  Dunkl kernels  associated to root systems of type $A$, for a purpose of finding  an explicit   representation  integrals for these  functions, following our recent work on symmetric case. We outline here a simple method  that leads us to  such formulas for the $A_2$ root system
 and provide a short and elementary proof of   Dunkl's formula for the intertwining operator  established in \cite{D1} for parameter $k>1/2$.
General references are \cite{D1,D2,D3,Je,R1,R2,R3}.

\par  Following the notations given in \cite{BA1},  letting $\mathbb{V}$ be the hyperplane,
$$\mathbb{V}=\{(x,y,z)\in \mathbb{R}^3;\; x+y+z=0\}$$ and the root system $R=\{\pm(e_1-e_2),\; \pm(e_1-e_3),\;\pm(e_2-e_3)\}$ where  $(e_1,eç_2,e_3)$ is the standard basis
of  the Euclidean space $\mathbb{R}^3$. Fixe $(e_1-e_2, e_2-e_3)$ as the basis of simple root and $C$ the corresponding  fundamental Weyl chamber,
$$C=\{\lambda=(\lambda_1,\lambda_2\lambda_3);\quad \lambda_3<\lambda_2<\lambda_1\}.$$
The Weyl group is isomorphic to the symmetric group  $S_3$. The  Dunkl operators are given by
$$T_i=\frac{\partial}{\partial x_i}+k\sum_{1\leq j\neq i\leq 3}\frac{1-s_{i,j}}{x_i-x_j},\qquad i=1,2,3$$
where $k$ is a positive  real parameter and $s_{i,j}$  acts on functions of vaiables $(x_1,x_2,x_3)$ by interchanging the variables $x_i$ and $x_j$.
The Dunkl kernel $E_k(.,y)$, $y\in \mathbb{R}^3$, characterized by being the unique  solution of the following eigenvalue problem
$$T_i(E_k(.,y))(x)=y_iE_k(x,y);\qquad E(0,y)=0,\quad i=1,2,3.$$
Let  $J_k$ the generalized Bessel function associated with $R$ and $k$,  given by
\begin{eqnarray}\label{k}
J_k(x,y)=\frac{1}{6}\sum_{\sigma\in G}E_k(\sigma.x,y).
\end{eqnarray}
The functions $J_k$ are related to the ordinary modified Bessel functions $\mathcal{J}_{k-\frac{1}{2}}$ by  (see \cite{BA1}):\\
\begin{eqnarray*}
J_{k}(\mu,\lambda)&=&\frac{\Gamma(3k)}{V(\lambda)^{2k-1}\Gamma(k)^3}\int_{\lambda_2}^{\lambda_1}
\int_{\lambda_3}^{\lambda_{2}}e^{\frac{(\mu_1+\mu_2-2\mu_3)(\nu_1+\nu_2)}{2}}
\mathcal{J}_{k-\frac{1}{2}}(\frac{(\mu_1-\mu_2)(\nu_1-\nu_2)}{2})\\&&\qquad \qquad\qquad\qquad\qquad\qquad(\nu_1-\nu_2)W_k(\mu,\lambda)d\nu_1d\nu_2,
\end{eqnarray*}
for all $\lambda=(\lambda_1,\lambda_2,\lambda_3)\in\mathbb{V}\cap C$  and $\mu\in \mathbb{R}^3$,
where
\begin{eqnarray*}
 V(\lambda)&= &(\lambda_1-\lambda_2)(\lambda_1-\lambda_3)(\lambda_1-\lambda_3)\\
W_k(\nu,\lambda)&=&\Big((\lambda_1-\nu_1)(\lambda_1-\nu_2)
(\lambda_2-\nu_2)(\nu_1-\lambda_2)(\nu_1-\lambda_3)(\nu_2-\lambda_3)\Big)^{k-1}.
\end{eqnarray*}
Recall here that
$$\mathcal{J}_{k-\frac{1}{2}}(z)= \frac{\Gamma(2k)}{2^{2k-1}\Gamma(k)^2}\int_{-1}^1e^{zt}(1-t^2)^{k-1}dt;\qquad z\in \mathbb{R}.$$

In the next section we shall use this fact to construct an  integral formula  for $E_k$. The following theorem is the main result of this article.
\begin{thm}\label{th1}
The Dunkl kernel of type $A_2$ has the following integral formula
\begin{eqnarray}\label{IF}
E_k(\mu,\lambda)&=& \frac{\Gamma(3k)}{V(\lambda)^{2k}\Gamma(k)^3}\int_{\lambda_2}^{\lambda_1}
\int_{\lambda_3}^{\lambda_{2}}
\bigg\{3(\lambda_1-\lambda_2)(\nu_1-\nu_2)\mathcal{J}_{k-\frac{1}{2}}\left(\frac{(\mu_1-\mu_2)(\nu_1-\nu_2)}{2}\right)\nonumber\\
&&-6\Big(\nu_1\nu_2+\frac{\lambda_3}{2}(\nu_1+\nu_2)+\lambda_1\lambda_2\Big)\mathcal{J}_{k-\frac{1}{2}}'\left(\frac{(\mu_1-\mu_2)(\nu_1-\nu_2)}{2}\right)\bigg\}
\nonumber\\&&
(\lambda_3-\nu_1)(\lambda_3-\nu_2)e^{\frac{(\mu_1+\mu_2-2\mu_3)(\nu_1+\nu_2)}{2}}W_{k}(\nu,\lambda)d\nu_1d\nu_2,
\end{eqnarray}
for all $\lambda\in\mathbb{V}\cap C$ and $\mu \in \mathbb{R}^3$.

\end{thm}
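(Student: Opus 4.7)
Let $F(\mu,\lambda)$ denote the right-hand side of (\ref{IF}). The plan is to prove $F=E_k$ by verifying the two characterising properties of the Dunkl kernel: $F(0,\lambda)=1$ and $T_i^\mu F(\mu,\lambda)=\lambda_i F(\mu,\lambda)$ for $i=1,2,3$.

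The conceptual starting point is to substitute the Euler representation
$$\mathcal{J}_{k-\frac{1}{2}}(u)=\frac{\Gamma(2k)}{2^{2k-1}\Gamma(k)^2}\int_{-1}^{1}e^{ut}(1-t^2)^{k-1}\,dt$$
recalled at the end of the introduction into both the given formula for $J_k$ and the proposed formula for $F$. Both then become triple integrals sharing a common plane-wave kernel of the form $e^{\langle\mu,\eta(\nu,t)\rangle}$, with $\eta$ an explicit linear combination of $\nu_1,\nu_2,t$; only the polynomial weight in $(\nu,t,\lambda)$ differs. In this way $J_k$ and $F$ are realised as different averages of the same family of plane waves, and the weight for $F$ is designed precisely so that, upon undoing the Weyl-group average implicit in $J_k$, one recovers the full Dunkl kernel rather than its $W$-symmetrisation.

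I would verify $F(0,\lambda)=1$ by direct substitution: at $\mu=0$ one has $\mathcal{J}_{k-\frac{1}{2}}(0)=1$ and $\mathcal{J}_{k-\frac{1}{2}}'(0)=0$, so only the first summand of the bracket survives and the claim reduces to the Selberg-type identity
$$\int_{\lambda_2}^{\lambda_1}\!\!\int_{\lambda_3}^{\lambda_2}(\nu_1-\nu_2)(\lambda_3-\nu_1)(\lambda_3-\nu_2)\,W_k(\nu,\lambda)\,d\nu_2\,d\nu_1=\frac{V(\lambda)^{2k}\,\Gamma(k)^3}{3(\lambda_1-\lambda_2)\,\Gamma(3k)},$$
which can be obtained from iterated Beta-function evaluations on the factors of $W_k$.

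The eigenvalue equations $T_i^\mu F=\lambda_i F$ are the delicate part. I would compute $T_i^\mu F$ by differentiating under the integral sign. The local part $\partial_{\mu_i}$ acts on the arguments of $\mathcal{J}_{k-\frac{1}{2}}$ and $\mathcal{J}_{k-\frac{1}{2}}'$ via the chain rule, producing $\mathcal{J}_{k-\frac{1}{2}}''$ terms that are eliminated using the modified Bessel equation $\mathcal{J}_{k-\frac{1}{2}}''(u)+\frac{2k}{u}\mathcal{J}_{k-\frac{1}{2}}'(u)=\mathcal{J}_{k-\frac{1}{2}}(u)$. The reflection part $k\sum_{j\ne i}(1-s_{ij})/(\mu_i-\mu_j)$ is controlled using the parity identities $\mathcal{J}_{k-\frac{1}{2}}(-u)=\mathcal{J}_{k-\frac{1}{2}}(u)$ and $\mathcal{J}_{k-\frac{1}{2}}'(-u)=-\mathcal{J}_{k-\frac{1}{2}}'(u)$, which turn the difference quotients into polynomial expressions in $u$ and the differences $\mu_i-\mu_j$. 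Combining the two contributions and performing integration by parts in $\nu_1,\nu_2$ against $W_k$ — whose boundary traces vanish on the simplex $\lambda_3\le\nu_2\le\lambda_2\le\nu_1\le\lambda_1$ for $k>1$, with $0<k\le1$ recovered by analytic continuation — collapses the integrand to $\lambda_i$ times the original. The specific coefficients $3(\lambda_1-\lambda_2)(\nu_1-\nu_2)$ and $-6(\nu_1\nu_2+\frac{\lambda_3}{2}(\nu_1+\nu_2)+\lambda_1\lambda_2)$ in the bracket are uniquely forced by the requirement that all three of these equations hold simultaneously.

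The main obstacle is this last step: keeping the reflection contributions, the Bessel equation substitution, and the boundary integration by parts consistent demands careful algebra. A cleaner route is to carry out the verification entirely in the triple-integral plane-wave representation, where the equations $T_i^\mu F=\lambda_i F$ reduce to an explicit system of identities for the $(\nu,t)$-weight that can be checked by direct computation.
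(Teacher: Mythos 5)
Your strategy---verify the normalization $F(0,\lambda)=1$ and the eigenvalue equations $T_i F=\lambda_i F$ for the right-hand side $F$ of (\ref{IF})---is legitimate in principle (the kernel is indeed characterized by these properties, with the normalization being $1$, not $0$ as in the paper's typo), and the normalization step reduces, as you say, to a checkable Selberg-type integral. But it is entirely different from the paper's route, and the step that carries all the difficulty is not actually supplied. The paper never touches the eigenvalue equations for $E_k$ directly: it starts from the known integral representation of the $W$-invariant Bessel function $J_k$ (from \cite{BA1}), uses Opdam's shift relation (\ref{p}) together with Dunkl's Proposition 1.4 to get the alternating-sum identity (\ref{k+1}), and then applies the explicit first-order operator $T=\alpha(\lambda)T_1+\beta(\lambda)T_2+1$ of Lemma \ref{l1} to $\frac{\gamma_k}{6}V(\lambda)V(\cdot)J_{k+1}(\cdot,\lambda)+J_k(\cdot,\lambda)$; the formula (\ref{IF}) is then \emph{derived} by integrations by parts against $W_{k+1}$ and the Bessel recurrences (\ref{1})--(\ref{2}). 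In that derivation the reflection parts of the Dunkl operators only ever act on $W$-invariant objects, where they are harmless.

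The gap in your plan is precisely the point this detour is designed to avoid. In the eigenvalue equation $T_iF=\lambda_iF$ you must evaluate $(1-s_{ij})F/(\mu_i-\mu_j)$ for all three transpositions, but the integrand of (\ref{IF}) depends on $\mu$ only through the two linear forms $\mu_1+\mu_2-2\mu_3$ and $\mu_1-\mu_2$, and only $s_{12}$ maps this pair to $(\mu_1+\mu_2-2\mu_3,\,-(\mu_1-\mu_2))$; hence your parity argument for $\mathcal{J}_{k-1/2}$ disposes of $s_{12}$ only. For $s_{13}$ and $s_{23}$ the reflected integrand involves genuinely different exponential and Bessel arguments, and the difference quotient $(e^{\langle\mu,\eta\rangle}-e^{\langle\mu,s_{ij}\eta\rangle})/(\mu_i-\mu_j)$ does not localize at a point of the $(\nu,t)$-domain---it smears along the segment from $s_{ij}\eta$ to $\eta$---so the verification cannot be reduced to ``an explicit system of identities for the $(\nu,t)$-weight checked by direct computation.'' What it reduces to is the statement that the proposed density is the intertwining measure, i.e.\ essentially the theorem itself. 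Unless you supply a mechanism for the two off-diagonal reflections (for instance by first symmetrizing and reducing to known facts about $J_k$, which is what the paper does), the proof does not close.
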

\section{Outline the proof}
An interesting relation between $J_k$ and $J_{k+1}$ is given in ( \cite{OP}, p.369 )  by the following functional equation
\begin{eqnarray}\label{p}
 T_V(J_{k+1}(.,y)V(.))(x)= \gamma_k J_k(x,y)
\end{eqnarray}
where $T_V=(T_1-T_2)(T_2-T_3(T_1-T_3))$ and $\gamma_k=T_V(V(.))(0)= \Big((2k+1)(3k+1)(3k+2)\Big)^{-1}$.
This together with  Proposition 1.4 of \cite{D3} implies
\begin{eqnarray}\label{k+1}
\sum_{\sigma\in G}det(\sigma)E_k(\sigma.\mu,\lambda)=\gamma_k V(\mu)V(\lambda)J_{k+1}(\mu,\lambda).
\end{eqnarray}
Combining  (\ref{k+1}) with (\ref{k}) yields for all $\mu\in \mathbb{R}^3$ and $\lambda\in \mathbb{V}$
\begin{eqnarray}
E_{k}(\mu,\lambda)+E_{k}(\mu,\sigma.\lambda)+E_{k}(\mu,\sigma^2.\lambda)=\frac{1}{2}\Big(\gamma_k V(\lambda)V(\mu)J_{k+1}(\mu,\lambda)+6J_{k}(\mu,\lambda)\Big)
\end{eqnarray}
where $\sigma=s_{1,3}s_{1,2}$. This is a  starting point  from which we have the following
\begin{lemma}\label{l1}
Let $\lambda\in \mathbb{V}$ and  $T$ be the operator
\begin{eqnarray*}
T=\frac{2\lambda_1+\lambda_2}{\lambda_1^2+\lambda_2^2+\lambda_1\lambda_2}T_1+\frac{2\lambda_2+\lambda_1}{\lambda_1^2+\lambda_2^2+\lambda_1\lambda_2}T_2+1
=\alpha(\lambda)T_1+\beta(\lambda)T_2+1
\end{eqnarray*}
Then  we have
\begin{eqnarray*}
E_{k}(\mu,\lambda)=
T\Big(\frac{\gamma_k}{6} \;V(\lambda)\; V(.)J_{k+1}(.,\lambda)+J_{k}(.,\lambda)\Big)(\mu), \qquad \mu \in \mathbb{R}^3.
\end{eqnarray*}
\end{lemma}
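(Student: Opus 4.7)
The plan is to extract $E_k(\mu,\lambda)$ from the cyclic-sum identity stated just before the lemma by combining it with its images under the Dunkl derivatives $T_1,T_2$ acting in $\mu$. Set
$$F(\mu,\lambda)=\frac{\gamma_k}{6}V(\lambda)V(\mu)J_{k+1}(\mu,\lambda)+J_k(\mu,\lambda),$$
so that the preceding identity reads $E_k(\mu,\lambda)+E_k(\mu,\sigma.\lambda)+E_k(\mu,\sigma^2.\lambda)=3F(\mu,\lambda)$. With $\sigma=s_{1,3}s_{1,2}$ one has $\sigma.\lambda=(\lambda_3,\lambda_1,\lambda_2)$ and $\sigma^2.\lambda=(\lambda_2,\lambda_3,\lambda_1)$; since $\det(\sigma)=+1$ and $J_k,J_{k+1}$ are Weyl-invariant in their second argument, $F(\mu,\sigma^i.\lambda)=F(\mu,\lambda)$ for each $i$, so only the three unknowns $a_i:=E_k(\mu,\sigma^i.\lambda)$ feel the cyclic action.

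Next, apply $T_1$ and $T_2$ in the variable $\mu$ to both sides of the identity. The defining eigenvalue property $T_j E_k(\cdot,y)(\mu)=y_j E_k(\mu,y)$ converts each differentiation into a scalar relation, yielding
\begin{align*}
\lambda_1 a_0+\lambda_3 a_1+\lambda_2 a_2 &= 3\,T_1F(\mu,\lambda),\\
\lambda_2 a_0+\lambda_1 a_1+\lambda_3 a_2 &= 3\,T_2F(\mu,\lambda),
\end{align*}
which together with $a_0+a_1+a_2=3F(\mu,\lambda)$ is a $3\times 3$ linear system in $(a_0,a_1,a_2)$.

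To isolate $a_0=E_k(\mu,\lambda)$, I look for coefficients $(p,q,r)$ that kill the $a_1,a_2$ columns, i.e.\ solving $p+q\lambda_1+r\lambda_2=1$, $p+q\lambda_3+r\lambda_1=0$, $p+q\lambda_2+r\lambda_3=0$. Subtracting pairs, eliminating $p$, and using $\lambda_1+\lambda_2+\lambda_3=0$ (which causes the denominator $\lambda_1^2+\lambda_2^2+\lambda_3^2-\lambda_1\lambda_2-\lambda_1\lambda_3-\lambda_2\lambda_3$ that appears to collapse to $3(\lambda_1^2+\lambda_2^2+\lambda_1\lambda_2)$, nonzero on $\mathbb{V}\setminus\{0\}$) gives $p=1/3$, $3q=\alpha(\lambda)$, $3r=\beta(\lambda)$ with exactly the coefficients of the lemma. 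Substitution then yields
$$E_k(\mu,\lambda)=F(\mu,\lambda)+\alpha(\lambda)\,T_1F(\mu,\lambda)+\beta(\lambda)\,T_2F(\mu,\lambda)=(TF)(\mu),$$
which is the claim.

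The only nontrivial step is the algebraic simplification in the elimination, specifically the recognition that the above symmetric polynomial in $(\lambda_1,\lambda_2,\lambda_3)$ collapses on $\mathbb{V}$ to $3(\lambda_1^2+\lambda_2^2+\lambda_1\lambda_2)$; once this is in hand, identification of $3q,3r$ with $\alpha(\lambda),\beta(\lambda)$ is immediate and the proof is complete. The trace-zero condition $\lambda\in\mathbb{V}$ is essential here; without it the $3\times 3$ determinant would not factor in this convenient way.
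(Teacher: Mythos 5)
Your argument is correct: starting from the cyclic identity (2.3), applying $T_1$ and $T_2$ in $\mu$ via the eigenvalue property, and solving the resulting $3\times 3$ system with $\lambda_1+\lambda_2+\lambda_3=0$ does produce exactly the coefficients $\alpha(\lambda)$, $\beta(\lambda)$ of the lemma (I checked the Cramer's-rule computation: $p=1/3$, $3q=(\lambda_1-\lambda_3)/(\lambda_1^2+\lambda_2^2+\lambda_1\lambda_2)=\alpha$, $3r=(\lambda_2-\lambda_3)/(\lambda_1^2+\lambda_2^2+\lambda_1\lambda_2)=\beta$). The paper omits this proof as a ``straightforward calculation'' but explicitly names (2.3) as the starting point, and your derivation is precisely the intended argument.
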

 The proof is a straightforward  calculation which we shall omit. However, to obtain our integral formula for $E_k$, it therefore comes down to express the following terms with suitable integrals
\begin{itemize}
  \item [(i)]   $ V(\mu)J_{k+1}(\mu,\lambda)$
\item [(ii)] $(\mu_1-\mu_2)(\mu_2-\mu_3) J_{k+1}(\mu,\lambda)$
\item [(iii)] $(\mu_1-\mu_2)(\mu_1-\mu_3) J_{k+1}(\mu,\lambda)$
\item [(iv)] $T_1(V(.)J_{k+1}(.,\lambda)(\mu)=V(\mu)\dfrac{\partial J_{k+1}}{\partial \mu_1}(\mu,\lambda)+(2k+1)\dfrac{\partial V(\mu)}{\partial \mu_1} J_{k+1}(\mu,\lambda)$
\item [(v)]  $T_2(V(.)J_{k+1}(.,\lambda)(\mu)=V(\mu)\dfrac{\partial J_{k+1}}{\partial \mu_2}(\mu,\lambda)+(2k+1)\dfrac{\partial V(\mu)}{\partial \mu_2} J_{k+1}(\mu,\lambda)$
\end{itemize}
    We will need to use the following
classical equations of the  modified Bessel function $\mathcal{J}_\alpha$, $\alpha> -\frac{1}{2}$,
\begin{eqnarray}\label{1}
    z\mathcal{J}_{\alpha+1}(z)&=&2(\alpha+1)\mathcal{J}_{\alpha}'(z)\\ \label{2}
\mathcal{J}_{\alpha}(z)&=& \mathcal{J}_{\alpha}''(z)+\frac{2\alpha+1}{z}\mathcal{J}_{\alpha}'(z)
\end{eqnarray}
and  the following facts:
\begin{eqnarray}
(\mu_1-\mu_2)(\mu_1-\mu_3)&=&\frac{(\mu_1-\mu_2)(\mu_1+\mu_2-2\mu_3)+(\mu_1-\mu_2)^2}{2}\label{3}
\\(\mu_1-\mu_2)(\mu_2-\mu_3)&=&\frac{(\mu_1-\mu_2)(\mu_1+\mu_2-2\mu_3)-(\mu_1-\mu_2)^2}{2}\label{4}
\\(\mu_1-\mu_3)(\mu_2-\mu_3)&=&\frac{(\mu_1+\mu_2-2\mu_3)^2-(\mu_1-\mu_2)^2}{4}\label{4'}
\\V(\mu)&=&\frac{(\mu_1+\mu_2-2\mu_3)^2(\mu_1-\mu_2)-(\mu_1-\mu_2)^3}{4}\;.\label{5}
\end{eqnarray}
First, from (\ref{1}) we have
\begin{eqnarray*}
&&(\mu_1-\mu_2)J_{k+1}(\mu,\lambda)\\&&\qquad\qquad\qquad=\frac{(4k+2)\Gamma(3k+3)}{V(\lambda)^{2k+1}\Gamma(k+1)^3}\int_{\lambda_2}^{\lambda_1}
\int_{\lambda_3}^{\lambda_{2}}e^{\frac{(\mu_1+\mu_2-2\mu_3)(\nu_1+\nu_2)}{2}}
\mathcal{J}_{k-\frac{1}{2}}'\left(\frac{(\mu_1-\mu_2)(\nu_1-\nu_2)}{2}\right)
\\&&\qquad\qquad\qquad\qquad\qquad\qquad\qquad\qquad\qquad\qquad W_{k+1}(\nu,\lambda)\;d\nu_1d\nu_2
\end{eqnarray*}
and by using integration by parts,
\begin{eqnarray*}
&&(\mu_1-\mu_2)^2J_{k+1}(\mu,\lambda)\\&&\qquad\qquad=\frac{(4k+2)\Gamma(3k+3)}{V(\lambda)^{2k+1}\Gamma(k+1)^3}\int_{\lambda_2}^{\lambda_1}
\int_{\lambda_3}^{\lambda_{2}}e^{\frac{(\mu_1+\mu_2-2\mu_3)(\nu_1+\nu_2)}{2}}
\mathcal{J}_{k-\frac{1}{2}}\left(\frac{(\mu_1-\mu_2)(\nu_1-\nu_2)}{2}\right)
\\&&\qquad\qquad\qquad\qquad\qquad\qquad\qquad\qquad\qquad(\partial_{\nu_1}-\partial_{\nu_2})W_{k+1}(\nu,\lambda)\;d\nu_1d\nu_2.
\end{eqnarray*}
Making use of  (\ref{2}) we have
\\\\
$\displaystyle{(\mu_1-\mu_2)^3J_{k+1}(\mu,\lambda)}$
\begin{eqnarray*}
&=&-\;\frac{(4k+2)\Gamma(3k+3)}{V(\lambda)^{2k+1}\Gamma(k+1)^3}\int_{\lambda_2}^{\lambda_1}
\int_{\lambda_3}^{\lambda_{2}}(\mu_1-\mu_2)e^{\frac{(\mu_1+\mu_2-2\mu_3)(\nu_1+\nu_2)}{2}}
\mathcal{J}_{k-\frac{1}{2}}''\left(\frac{(\mu_1-\mu_2)(\nu_1-\nu_2)}{2}\right)
\\&&\qquad\qquad\qquad\qquad\qquad\qquad\qquad (\partial_{\nu_1}-\partial_{\nu_2})W_{k+1}(\nu,\lambda)\;d\nu_1d\nu_2\\
&&-\;\frac{4k(4k+2)\Gamma(3k+3)}{V(\lambda)^{2k+1}\Gamma(k+1)^3}\int_{\lambda_2}^{\lambda_1}
\int_{\lambda_3}^{\lambda_{2}}e^{\frac{(\mu_1+\mu_2-2\mu_3)(\nu_1+\nu_2)}{2}}
\mathcal{J}_{k-\frac{1}{2}}'\left(\frac{(\mu_1-\mu_2)(\nu_1-\nu_2)}{2}\right)
\\&&\qquad\qquad\qquad\qquad\qquad\qquad\qquad\frac{(\partial_{\nu_1}-\partial_{\nu_2})W_{k+1}(\nu,\lambda)}{\nu_1-\nu_2}\;d\nu_1d\nu_2.
\end{eqnarray*}
and by integration by parts one-time,
\\\\
$\displaystyle{(\mu_1+\mu_2-2\mu_3)^2(\mu_1-\mu_2)J_{k+1}(\mu,\lambda)}$
\begin{eqnarray*}
&=-&\frac{(4k+2)\Gamma(3k+3)}{V(\lambda)^{2k+1}\Gamma(k+1)^3}\int_{\lambda_2}^{\lambda_1}
\int_{\lambda_3}^{\lambda_{2}}(\mu_1+\mu_2-2\mu_3)e^{\frac{(\mu_1+\mu_2-2\mu_3)(\nu_1+\nu_2)}{2}}
\mathcal{J}_{k-\frac{1}{2}}'\left(\frac{(\mu_1-\mu_2)(\nu_1-\nu_2)}{2}\right)
\\&&\qquad\qquad\qquad\qquad\qquad\qquad(\partial_{\nu_1}+\partial_{\nu_2})W_{k+1}(\nu,\lambda)d\nu_1d\nu_2.
\end{eqnarray*}
 Note that the condition $k>0$ is not sufficient to make
an integration by parts again using the derivative operators $\partial_{\nu_1}+\partial_{\nu_2}$ or $\partial_{\nu_1}-\partial_{\nu_2}$, because the appearance
of  $\partial_{\nu_1}^2W_{k+1}$ and $\partial_{\nu_2}^2W_{k+1}$. However, we see that
\begin{eqnarray*}
&-&(\mu_1+\mu_2-2\mu_3)e^{\frac{(\mu_1+\mu_2-2\mu_3)(\nu_1+\nu_2)}{2}}
\mathcal{J}_{k-\frac{1}{2}}'\left(\frac{(\mu_1-\mu_2)(\nu_1-\nu_2)}{2}\right)(\partial_{\nu_1}+\partial_{\nu_2})W_{k+1}(\nu,\lambda)
\\&+&  (\mu_1-\mu_2)e^{\frac{(\mu_1+\mu_2-2\mu_3)(\nu_1+\nu_2)}{2}}
\mathcal{J}_{k-\frac{1}{2}}''\left(\frac{(\mu_1-\mu_2)(\nu_1-\nu_2)}{2}\right)(\partial_{\nu_1}-\partial_{\nu_2})W_{k+1}(\nu,\lambda)
\\&=&-2\partial_{\nu_1}\left\{e^{\frac{(\mu_1+\mu_2-2\mu_3)(\nu_1+\nu_2)}{2}}
\mathcal{J}_{k-\frac{1}{2}}'\left(\frac{(\mu_1-\mu_2)(\nu_1-\nu_2)}{2}\right)\right\}\partial_{\nu_2}W_{k+1}(\nu,\lambda)
\\&&-2\partial_{\nu_2}\left\{e^{\frac{(\mu_1+\mu_2-2\mu_3)(\nu_1+\nu_2)}{2}}
\mathcal{J}_{k-\frac{1}{2}}'\left(\frac{(\mu_1-\mu_2)(\nu_1-\nu_2)}{2}\right)\right\}\partial_{\nu_1}W_{k+1}(\nu,\lambda).
\end{eqnarray*}
Thus from  (\ref{5}) and integration by parts we obtain
\\\\
$\displaystyle{V(\mu)J_{k+1}(\mu,\lambda)}$
\begin{eqnarray*}
&=&\frac{(4k+2)\Gamma(3k+3)}{V(\lambda)^{2k+1}\Gamma(k+1)^3}\int_{\lambda_2}^{\lambda_1}
\int_{\lambda_3}^{\lambda_{2}}e^{\frac{(\mu_1+\mu_2-2\mu_3)(\nu_1+\nu_2)}{2}}
\mathcal{J}_{k-\frac{1}{2}}'\left(\frac{(\mu_1-\mu_2)(\nu_1-\nu_2)}{2}\right)
\\&&\qquad\qquad\qquad\qquad\qquad\qquad\left( \partial_{\nu_1}\partial_{\nu_2}+
k\frac{\partial_{\nu_1}-\partial_{\nu_2}}{\nu_1-\nu_2}\right)W_{k+1}(\nu,\lambda)d\nu_1d\nu_2
\end{eqnarray*}
which is a nice integral formula for $(i)$.\par
Next, using (\ref{3})and (\ref{4}) with integration by parts,
\begin{eqnarray*}
&&(\mu_1-\mu_2)(\mu_1-\mu_3)J_{k+1}(\mu,\lambda)\\&&\qquad=
-\; \frac{(4k+2)\Gamma(3k+3)}{V(\lambda)^{2k+1}\Gamma(k+1)^3}\int_{\lambda_2}^{\lambda_1}
\int_{\lambda_3}^{\lambda_{2}}e^{\frac{(\mu_1+\mu_2-2\mu_3)(\nu_1+\nu_2)}{2}}
\mathcal{J}_{k-\frac{1}{2}}\left(\frac{(\mu_1-\mu_2)(\nu_1-\nu_2)}{2}\right)
\\&&\qquad\qquad\qquad\qquad\qquad\qquad\qquad\qquad\quad(\partial_{\nu_1}-\partial_{\nu_2})W_{k+1}(\nu,\lambda)d\nu_1d\nu_2\\
&&\qquad\qquad-\frac{(4k+2)\Gamma(3k+3)}{V(\lambda)^{2k+1}\Gamma(k+1)^3}\int_{\lambda_2}^{\lambda_1}
\int_{\lambda_3}^{\lambda_{2}}e^{\frac{(\mu_1+\mu_2-2\mu_3)(\nu_1+\nu_2)}{2}}
\mathcal{J}_{k-\frac{1}{2}}'\left(\frac{(\mu_1-\mu_2)(\nu_1-\nu_2)}{2}\right)
\\&&\qquad\qquad\qquad\qquad\qquad\qquad\qquad\qquad\quad(\partial_{\nu_1}+\partial_{\nu_2}) W_{k+1}(\nu,\lambda)d\nu_1d\nu_2
\end{eqnarray*}
and
\begin{eqnarray*}
&&(\mu_1-\mu_2)(\mu_2-\mu_3)J_{k+1}(\mu,\lambda)\\&&\qquad\qquad=
\frac{(2k+1)\Gamma(3k+3)}{V(\lambda)^{2k+1}\Gamma(k+1)^3}\int_{\lambda_2}^{\lambda_1}
\int_{\lambda_3}^{\lambda_{2}}e^{\frac{(\mu_1+\mu_2-2\mu_3)(\nu_1+\nu_2)}{2}}
\mathcal{J}_{k-\frac{1}{2}}\left(\frac{(\mu_1-\mu_2)(\nu_1-\nu_2)}{2}\right)
\\&&\qquad\qquad\qquad\qquad\qquad\qquad\qquad\qquad\qquad(\partial_{\nu_1}-\partial_{\nu_2})W_{k+1}(\lambda,\mu)d\nu_1d\nu_2\\
&&-\frac{(2k+1)\Gamma(3k+3)}{V(\lambda)^{2k+1}\Gamma(k+1)^3}\int_{\lambda_2}^{\lambda_1}
\int_{\lambda_3}^{\lambda_{2}}e^{\frac{(\mu_1+\mu_2-2\mu_3)(\nu_1+\nu_2)}{2}}
\mathcal{J}_{k-\frac{1}{2}}'(\frac{(\mu_1-\mu_2)(\nu_1-\nu_2)}{2})
\\&&\qquad\qquad\qquad\qquad\qquad\qquad\qquad\qquad\qquad\qquad(\partial_{\nu_1}+\partial_{\nu_2}) W_{k+1}(\nu,\lambda)d\nu_1d\nu_2.
\end{eqnarray*}
For (iv) we  make use of the fact that
$$z\mathcal{J}'_{\alpha+1}(z)=2(\alpha+1)\Big(\mathcal{J}_\alpha(z)-\mathcal{J}_{\alpha+1}(z)\Big),$$
and write
\begin{eqnarray*}
 &&V(\mu)\frac{\partial J_{k+1}}{\partial \mu_1}(\mu)\\&&=\frac{\Gamma(3k+3)}{2V(\lambda)^{2k+1}\Gamma(k+1)^3}
 V(\mu)\int_{\lambda_2}^{\lambda_1}
\int_{\lambda_3}^{\lambda_{2}}e^{\frac{(\mu_1+\mu_2-2\mu_3)(\nu_1+\nu_2)}{2}}
\mathcal{J}_{k+\frac{1}{2}}\left(\frac{(\mu_1-\mu_2)(\nu_1-\nu_2)}{2}\right)\\&&\qquad \qquad\qquad\qquad\qquad\qquad\qquad\qquad\qquad(\nu_1-\nu_2)(\nu_1+\nu_2)W_{k+1}(\nu,\lambda)d\nu_1d\nu_2
\\&&+\frac{(2k+1)\Gamma(3k+3)}{V(\lambda)^{2k+1}\Gamma(k+1)^3}(\mu_1-\mu_3)(\mu_2-\mu_3)\int_{\lambda_2}^{\lambda_1}
\int_{\lambda_3}^{\lambda_{2}}e^{\frac{(\mu_1+\mu_2-2\mu_3)(\nu_1+\nu_2)}{2}}
\mathcal{J}_{k-\frac{1}{2}}\left(\frac{(\mu_1-\mu_2)(\nu_1-\nu_2)}{2}\right)\\&&\qquad\qquad\qquad\qquad\qquad\qquad\qquad\qquad\qquad\qquad\qquad
(\nu_1-\nu_2)W_{k+1}(\nu,\lambda)d\nu_1d\nu_2\\
&&-(2k+1)(\mu_1-\mu_3)(\mu_2-\mu_3)J_{k+1}\;.
\end{eqnarray*}
Proceeding as for the integral representation of $(i)$, we have
\begin{eqnarray*}
&&\frac{\Gamma(3k+3)}{2V(\lambda)^{2k+1}\Gamma(k+1)^3}
 \Big\{V(\mu)\int_{\lambda_2}^{\lambda_1}
\int_{\lambda_3}^{\lambda_{2}}e^{\frac{(\mu_1+\mu_2-2\mu_3)(\nu_1+\nu_2)}{2}}
\mathcal{J}_{k+\frac{1}{2}}\left(\frac{(\mu_1-\mu_2)(\nu_1-\nu_2)}{2}\right)\\&&\qquad \qquad\qquad\qquad\qquad\qquad(\nu_1-\nu_2)(\nu_1+\nu_2)W_{k+1}(\nu,\lambda)d\nu_1d\nu_2\Big\}
\\&=&\frac{(2k+1)\Gamma(3k+3)}{V(\lambda)^{2k+1}\Gamma(k+1)^3}\int_{\lambda_2}^{\lambda_1}
\int_{\lambda_3}^{\lambda_{2}}e^{\frac{(\mu_1+\mu_2-2\mu_3)(\nu_1+\nu_2)}{2}}
\mathcal{J}_{k-\frac{1}{2}}'\left(\frac{(\mu_1-\mu_2)(\nu_1-\nu_2)}{2}\right)
\\&&\qquad\qquad\left\{\partial_{\nu_1}\partial_{\nu_2}\Big((\nu_1+\nu_2)W_{k+1}(\nu,\lambda)\Big)
+k\;\frac{(\partial_{\nu_1}-\partial_{\nu_2})\Big((\nu_1+\nu_2)W_{k+1}(\nu,\lambda)\Big)}{\nu_1-\nu_2}\right\}d\nu_1d\nu_2\;.
\end{eqnarray*}
On the other hand, by using (\ref{2}) and (\ref{4'}) with integration by parts,
\begin{eqnarray*}
&&\frac{(2k+1)\Gamma(3k+3)}{V(\lambda)^{2k+1}\Gamma(k+1)^3}(\mu_1-\mu_3)(\mu_2-\mu_3)\int_{\lambda_2}^{\lambda_1}
\int_{\lambda_3}^{\lambda_{2}}e^{\frac{(\mu_1+\mu_2-2\mu_3)(\nu_1+\nu_2)}{2}}
\mathcal{J}_{k-\frac{1}{2}}\left(\frac{(\mu_1-\mu_2)(\nu_1-\nu_2)}{2}\right)\\ &&\qquad\qquad\qquad\qquad\qquad \qquad\qquad\qquad\qquad\qquad\qquad(\nu_1-\nu_2)W_{k+1}(\nu,\lambda)d\nu_1d\nu_2
\\&=& -\frac{(2k+1)\Gamma(3k+3)}{4V(\lambda)^{2k+1}\Gamma(k+1)^3}\int_{\lambda_2}^{\lambda_1}
\int_{\lambda_3}^{\lambda_{2}}(\mu_1+\mu_2-2\mu_3)e^{\frac{(\mu_1+\mu_2-2\mu_3)(\nu_1+\nu_2)}{2}}
\mathcal{J}_{k-\frac{1}{2}}\left(\frac{(\mu_1-\mu_2)(\nu_1-\nu_2)}{2}\right)\\ &&\qquad\qquad\qquad\qquad\qquad \qquad\qquad(\partial\nu_1+\partial\nu_2)\Big(\nu_1-\nu_2\Big)W_{k+1}(\nu,\lambda)d\nu_1d\nu_2
\\&+& \frac{(2k+1)\Gamma(3k+3)}{4V(\lambda)^{2k+1}\Gamma(k+1)^3}\int_{\lambda_2}^{\lambda_1}
\int_{\lambda_3}^{\lambda_{2}}(\mu_1-\mu_2)e^{\frac{(\mu_1+\mu_2-2\mu_3)(\nu_1+\nu_2)}{2}}
\mathcal{J}_{k-\frac{1}{2}}'\left(\frac{(\mu_1-\mu_2)(\nu_1-\nu_2)}{2}\right)
\\ &&\qquad\qquad\qquad\qquad\qquad \qquad(\partial\nu_1-\partial\nu_2)\Big(\nu_1-\nu_2)W_{k+1}(\nu,\lambda)\Big)d\nu_1d\nu_2
\\&+& \frac{k(2k+1)\Gamma(3k+3)}{V(\lambda)^{2k+1}\Gamma(k+1)^3}\int_{\lambda_2}^{\lambda_1}
\int_{\lambda_3}^{\lambda_{2}}e^{\frac{(\mu_1+\mu_2-2\mu_3)(\nu_1+\nu_2)}{2}}
\mathcal{J}_{k-\frac{1}{2}}\left(\frac{(\mu_1-\mu_2)(\nu_1-\nu_2)}{2}\right)
\\ &&\qquad\qquad\qquad\qquad\qquad \qquad(\partial\nu_1-\partial\nu_2)W_{k+1}(\nu,\lambda)d\nu_1d\nu_2.
\end{eqnarray*}
As we noted above for the use  of integration  by parts a second time, we can   do it by  the  following observations
\begin{eqnarray*}
&-&(\mu_1+\mu_2-2\mu_3)e^{\frac{(\mu_1+\mu_2-2\mu_3)(\nu_1+\nu_2)}{2}}
\mathcal{J}_{k-\frac{1}{2}}\left(\frac{(\mu_1-\mu_2)(\nu_1-\nu_2)}{2}\right)(\partial_{\nu_1}+\partial_{\nu_2})\Big((\nu_1-\nu_2)W_{k+1}(\nu,\lambda)\Big)
\\&+&  (\mu_1-\mu_2)e^{\frac{(\mu_1+\mu_2-2\mu_3)(\nu_1+\nu_2)}{2}}
\mathcal{J}_{k-\frac{1}{2}}'\left(\frac{(\mu_1-\mu_2)(\nu_1-\nu_2)}{2}\right)(\partial_{\nu_1}-\partial_{\nu_2})\Big((\nu_1-\nu_2)W_{k+1}(\nu,\lambda)\Big)
\\&=&-2\partial_{\nu_1}\left\{e^{\frac{(\mu_1+\mu_2-2\mu_3)(\nu_1+\nu_2)}{2}}
\mathcal{J}_{k-\frac{1}{2}}'\left(\frac{(\mu_1-\mu_2)(\nu_1-\nu_2)}{2}\right)\right\}\partial_{\nu_2}\Big((\nu_1-\nu_2)W_{k+1}(\nu,\lambda)\Big)
\\&&-2\partial_{\nu_2}\left\{e^{\frac{(\mu_1+\mu_2-2\mu_3)(\nu_1+\nu_2)}{2}}
\mathcal{J}_{k-\frac{1}{2}}'\left(\frac{(\mu_1-\mu_2)(\nu_1-\nu_2)}{2}\right)\right\}\partial_{\nu_1}\Big((\nu_1-\nu_2)W_{k+1}(\nu,\lambda)\Big).
\end{eqnarray*}
Thus
 \begin{eqnarray*}
&&\frac{(2k+1)\Gamma(3k+3)}{V(\lambda)^{2k+1}\Gamma(k+1)^3}(\mu_1-\mu_3)(\mu_2-\mu_3)\int_{\lambda_2}^{\lambda_1}
\int_{\lambda_3}^{\lambda_{2}}e^{\frac{(\mu_1+\mu_2-2\mu_3)(\nu_1+\nu_2)}{2}}
\mathcal{J}_{k-\frac{1}{2}}\left(\frac{(\mu_1-\mu_2)(\nu_1-\nu_2)}{2}\right)\\ &&\qquad\qquad\qquad\qquad\qquad \qquad\qquad\qquad\qquad\qquad\qquad(\nu_1-\nu_2)W_{k+1}(\nu,\lambda)d\nu_1d\nu_2
\\&=&
\frac{(2k+1)\Gamma(3k+3)}{V(\lambda)^{2k+1}\Gamma(k+1)^3}\int_{\lambda_2}^{\lambda_1}
\int_{\lambda_3}^{\lambda_{2}}e^{\frac{(\mu_1+\mu_2-2\mu_3)(\nu_1+\nu_2)}{2}}
\mathcal{J}_{k-\frac{1}{2}}(\frac{(\mu_1-\mu_2)(\nu_1-\nu_2)}{2})
\\&&\qquad\qquad\qquad\qquad\qquad\left\{\partial\nu_1\partial\nu_2\Big((\nu_1-\nu_2)W_{k+1}(\nu,\lambda)\Big)+k (\partial\nu_1-\partial\nu_2)W_{k+1}(\nu,\lambda)\right\}d\nu_1d\nu_2\;.
\end{eqnarray*}
From these calculations it follows that
\begin{eqnarray*}
&&T_1(V (.)J_{k+1}(.,\lambda))(\mu)\\
&&=V(\mu)\frac{\partial J_{k+1}}{\partial \mu_1}(\mu)+(2k+1)\Big((\mu_1-\mu_3)(\mu_2-\mu_3)+(\mu_1-\mu_2)(\mu_2-\mu_3)\Big) J_{k+1}(\mu)\qquad\qquad\\&&=
\frac{(2k+1)\Gamma(3k+3)}{V(\lambda)^{2k+1}\Gamma(k+1)^3}\int_{\lambda_2}^{\lambda_1}
\int_{\lambda_3}^{\lambda_{2}}e^{\frac{(\mu_1+\mu_2-2\mu_3)(\nu_1+\nu_2)}{2}}
\mathcal{J}_{k-\frac{1}{2}}'\left(\frac{(\mu_1-\mu_2)(\nu_1-\nu_2)}{2}\right)
\\&& \qquad\qquad\qquad\left\{(\nu_1+\nu_2)\left(\partial\nu_1\partial\nu_2
 +k\frac{\partial_{\nu_1}-\partial_{\nu_2}}{\nu_1-\nu_2} \right)
 -2k(\partial_{\nu_1}+\partial_{\nu_2})\right\}W_{k+1}(\nu,\lambda)d\nu_1d\nu_2
\\&+&
\frac{(2k+1)\Gamma(3k+3)}{V(\lambda)^{2k+1}\Gamma(k+1)^3}\int_{\lambda_2}^{\lambda_1}
\int_{\lambda_3}^{\lambda_{2}}e^{\frac{(\mu_1+\mu_2-2\mu_3)(\nu_1+\nu_2)}{2}}
\mathcal{J}_{k-\frac{1}{2}}\left(\frac{(\mu_1-\mu_2)(\nu_1-\nu_2)}{2}\right)
\\&&\qquad\qquad\qquad\qquad\qquad\qquad\qquad(\nu_1-\nu_2)\left(\partial\nu_1\partial\nu_2
 +3k\frac{(\partial_{\nu_1}-\partial_{\nu_2})}{\nu_1-\nu_2}\right)W_{k+1}(\nu,\lambda)d\nu_1d\nu_2
\end{eqnarray*}

By the fact that
$$T_2(V (.)J_{k+1}(.,\lambda))(\mu_1,\mu_2,\mu_3)=-T_1(V (.)J_{k+1}(.,\lambda))(\mu_2,\mu_1,\mu_3)$$
we also have
\begin{eqnarray*}
&&T_2(V (.)J_{k+1}(.,\lambda))(\mu)
\\&&=
\frac{(2k+1)\Gamma(3k+3)}{V(\lambda)^{2k+1}\Gamma(k+1)^3}\int_{\lambda_2}^{\lambda_1}
\int_{\lambda_3}^{\lambda_{2}}e^{\frac{(\mu_1+\mu_2-2\mu_3)(\nu_1+\nu_2)}{2}}
\mathcal{J}_{k-\frac{1}{2}}'\left(\frac{(\mu_1-\mu_2)(\nu_1-\nu_2)}{2}\right)
\\&& \qquad\qquad\left\{(\nu_1+\nu_2)\left(\partial\nu_1\partial\nu_2
 +k\frac{\partial_{\nu_1}-\partial_{\nu_2}}{\nu_1-\nu_2} \right)
 -2k(\partial_{\nu_1}+\partial_{\nu_2})\right\}W_{k+1}(\nu,\lambda)d\nu_1d\nu_2
\\&-&
\frac{(2k+1)\Gamma(3k+3)}{V(\lambda)^{2k+1}\Gamma(k+1)^3}\int_{\lambda_2}^{\lambda_1}
\int_{\lambda_3}^{\lambda_{2}}e^{\frac{(\mu_1+\mu_2-2\mu_3)(\nu_1+\nu_2)}{2}}
\mathcal{J}_{k-\frac{1}{2}}\left(\frac{(\mu_1-\mu_2)(\nu_1-\nu_2)}{2}\right)(\nu_1-\nu_2)
\\&&\qquad\qquad\qquad\qquad\qquad\qquad\left(\partial\nu_1\partial\nu_2
 +3k\frac{(\partial_{\nu_1}-\partial_{\nu_2})}{\nu_1-\nu_2}\right)W_{k+1}(\nu,\lambda)d\nu_1d\nu_2\;.
\end{eqnarray*}
\par By virtue of these integral  formulas we obtain
\begin{eqnarray*}
&&T(V (.)J_{k+1}(.,\lambda))(\mu)
\\&&=
\frac{(2k+1)\Gamma(3k+3)}{V(\lambda)^{2k+1}\Gamma(k+1)^3}\int_{\lambda_2}^{\lambda_1}
\int_{\lambda_3}^{\lambda_{2}}e^{\frac{(\mu_1+\mu_2-2\mu_3)(\nu_1+\nu_2)}{2}}
\mathcal{J}_{k-\frac{1}{2}}'\left(\frac{(\mu_1-\mu_2)(\nu_1-\nu_2)}{2}\right)
\\&& \left\{((\alpha+\beta)(\nu_1+\nu_2)+2)\left(\partial\nu_1\partial\nu_2
 +k\frac{\partial_{\nu_1}-\partial_{\nu_2}}{\nu_1-\nu_2} \right)
 -2k(\alpha+\beta)(\partial_{\nu_1}+\partial_{\nu_2})\right\}W_{k+1}(\nu,\lambda)d\nu_1d\nu_2
\\&+&
\frac{(2k+1)\Gamma(3k+3)}{V(\lambda)^{2k+1}\Gamma(k+1)^3}\int_{\lambda_2}^{\lambda_1}
\int_{\lambda_3}^{\lambda_{2}}e^{\frac{(\mu_1+\mu_2-2\mu_3)(\nu_1+\nu_2)}{2}}
\mathcal{J}_{k-\frac{1}{2}}\left(\frac{(\mu_1-\mu_2)(\nu_1-\nu_2)}{2}\right)(\nu_1-\nu_2)
\\&&\qquad\qquad\qquad\qquad\qquad\qquad (\alpha-\beta)\left(\partial\nu_1\partial\nu_2
 +3k\frac{\partial_{\nu_1}-\partial_{\nu_2}}{\nu_1-\nu_2}\right)W_{k+1}(\nu,\lambda)d\nu_1d\nu_2\;.
\end{eqnarray*}
Put $a(\lambda)=\lambda_1\lambda_2+\lambda_1\lambda_3+\lambda_2\lambda_3$ and
 $b(\lambda)=-\lambda_1\lambda_2\lambda_3$,  we have

\begin{eqnarray*}
&&\left(\partial\nu_1\partial\nu_2
 +k\frac{\partial_{\nu_1}-\partial_{\nu_2}}{\nu_1-\nu_2} \right)W_{k+1}(\nu,\lambda)
 =-k^2\Big(6\nu_1^2\nu_2^2+2a(\nu_1^2+\nu_2^2+\nu_1\nu_2)+3b(\nu_1+\nu_2)\Big)W_{k}(\nu,\lambda)\\
 &&\left\{(\nu_1+\nu_2)\left(\partial\nu_1\partial\nu_2
 +k\frac{\partial_{\nu_1}-\partial_{\nu_2}}{\nu_1-\nu_2} \right)
 -2k(\partial_{\nu_1}+\partial_{\nu_2})\right\}W_{k+1}(\nu,\lambda)\\&&\qquad\qquad\qquad\qquad\qquad\qquad=k^2\Big( 2a\nu_1\nu_2(\nu_1+\nu_2)+3b(\nu_1-\nu_2)^2+2a^2(\nu_1+\nu_2)+4ab\Big)W_{k}(\nu,\lambda)
 \\ &&\left(\partial\nu_1\partial\nu_2 +3k\frac{(\partial_{\nu_1}-\partial_{\nu_2})}{\nu_1-\nu_2}\right)W_{k+1}(\nu,\lambda)
 =k^2\Big(-6a\nu_1\nu_2-9b(\nu_1+\nu_2)+2a^2\Big)W_{k}(\nu,\lambda)
  \\&&\left\{((\alpha+\beta)(\nu_1+\nu_2)+2)\left(\partial\nu_1\partial\nu_2
 +k\frac{\partial_{\nu_1}-\partial_{\nu_2}}{\nu_1-\nu_2} \right)
 -2k(\alpha+\beta)(\partial_{\nu_1}+\partial_{\nu_2})\right\}W_{k+1}(\nu,\lambda)\\&&
 =-k^2\Big(12\nu_1^2\nu_2^2+4a(\nu_1^2+\nu_2^2+\nu_1\nu_2)+6b(\nu_1+\nu_2)\Big)W_{k}(\nu,\lambda)\\&&
 \qquad\qquad\qquad\qquad\qquad+(\alpha+\beta)k^2\Big( 2a\nu_1\nu_2(\nu_1+\nu_2)+3b(\nu_1-\nu_2)^2+2a^2(\nu_1+\nu_2)+4ab\Big)W_{k}(\nu,\lambda).
 \end{eqnarray*}
We finally obtain
\begin{eqnarray*}
E_k(\mu,\lambda)&=& \frac{\Gamma(3k)}{V(\lambda)^{2k}\Gamma(k)^3}\int_{\lambda_2}^{\lambda_1}
\int_{\lambda_3}^{\lambda_{2}}e^{\frac{(\mu_1+\mu_2-2\mu_3)(\nu_1+\nu_2)}{2}}\mathcal{J}_{k-\frac{1}{2}}\left(\frac{(\mu_1-\mu_2)(\nu_1-\nu_2)}{2}\right)
(\nu_1-\nu_2)\\&&\left\{\frac{\alpha-\beta}{2}\Big(-6a\nu_1\nu_2-9b(\nu_1+\nu_2)+2a^2\Big)+\Big(\frac{\alpha+\beta}{2}(\nu_1+\nu_2)+1\Big)V(\lambda)\right\}
W_{k}(\nu,\lambda)d\nu_1d\nu_2
\\&+&\frac{\Gamma(3k)}{V(\lambda)^{2k}\Gamma(k)^3}\int_{\lambda_2}^{\lambda_1}
\int_{\lambda_3}^{\lambda_{2}}e^{\frac{(\mu_1+\mu_2-2\mu_3)(\nu_1+\nu_2)}{2}}\mathcal{J}_{k-\frac{1}{2}}'(\frac{(\mu_1-\mu_2)(\nu_1-\nu_2)}{2})
\\&&\Big\{\frac{(\alpha+\beta)}{2}\Big( 2a\nu_1\nu_2(\nu_1+\nu_2)+3b(\nu_1-\nu_2)^2+2a^2(\nu_1+\nu_2)+4ab\Big)
\\&-&\Big(6\nu_1^2\nu_2^2+2a(\nu_1^2+\nu_2^2+\nu_1\nu_2)+3b(\nu_1+\nu_2)\Big)+\frac{\alpha-\beta}{2}(\nu_1-\nu_2)^2V(\lambda)\Big\}
 W_{k}(\nu,\lambda)d\nu_1d\nu_2
\end{eqnarray*}
where,
\begin{eqnarray*}
&&\frac{\alpha-\beta}{2}\Big(-6a\nu_1\nu_2-9b(\nu_1+\nu_2)+2a^2\Big)+\Big(\frac{\alpha+\beta}{2}(\nu_1+\nu_2)+1\Big)V(\lambda)
\\&=&3(\lambda_1-\lambda_2)\nu_1\nu_2+3(\lambda_1^2-\lambda_2^2)(\nu_1+\nu_2)+3\lambda_3^2(\lambda_1-\lambda_2)
\\&=&3(\lambda_1-\lambda_2)(\lambda_3-\nu_1)(\lambda_3-\nu_2),
\end{eqnarray*}
\begin{eqnarray*}
&&\Big\{\frac{(\alpha+\beta)}{2}\Big( 2a\nu_1\nu_2(\nu_1+\nu_2)+3b(\nu_1-\nu_2)^2+2a^2(\nu_1+\nu_2)+4ab\Big)
\\&-&\Big(6\nu_1^2\nu_2^2+2a(\nu_1^2+\nu_2^2+\nu_1\nu_2)+3b(\nu_1+\nu_2)\Big)+\frac{\alpha-\beta}{2}(\nu_1-\nu_2)^2V(\lambda)\Big\}
\\&=&-6\nu_1^2\nu_2^2+3\lambda_3\nu_1\nu_2(\nu_1+\nu_2)-3\lambda_3(\lambda_1^2+\lambda_2^2)(\nu_1+\nu_2)-6\lambda_1\lambda_2\lambda_3^2-
2(\nu_1^2+\nu_2^2+\nu_1\nu_2)(\lambda_1\lambda_2-\lambda_3^2)\\
&&+(2\lambda_1\lambda_2+\lambda_3^2)(\nu_1-\nu_2)^2\\
&=&-6(\lambda_3-\nu_1)(\lambda_3-\nu_2)\Big(\nu_1\nu_2+\frac{\lambda_3}{2}(\nu_1+\nu_2)+\lambda_1\lambda_2\Big).\\
 \end{eqnarray*}
This conclude the proof  of Theorem the main result.
\par Now if we  equippped the space $\mathbb{V}$ with the basis $(\; e_1-e_3,\; e_2 -e_3\;)$ and with  the Lebesgue measure $d\nu=d\nu_1d\nu_2$, then we can state
\begin{cor} The Dunkl kernel $E_k$  connected with the exponential function by
\begin{eqnarray}\label{F1}
E_{k}(\mu,\lambda)= \int_{co(\lambda) }
 e^{\langle\mu,\nu \rangle} F_k\left(\frac{\nu_1+\nu_2}{2},\frac{\nu_1-\nu_2}{2},\lambda\right)\;d\nu
\end{eqnarray}
where $co(\lambda)=\{\nu\in \mathbb{V},\; \lambda_3\leq \;\nu_1,\;\nu_2,\;\nu_3\;\leq \lambda_1\}$, the convex hull of the orbit $G.\lambda$ and the  function
$F_k$ is given by
\begin{eqnarray*}
&&F_k(x,y,\lambda)=\\&&
\frac{\Gamma(2k)\Gamma(3k)}{2^{2k-2}\Gamma(k)^5V(\lambda)^{2k}}\int_{\max(|y|,|x-\lambda_2|)}^{\min(x-\lambda_3, \lambda_1-x)}
\Big( 3z^2(2y+\lambda_1-\lambda_2)-6y(x-\lambda_1)(x-\lambda_2)\Big)\\&&
\qquad\qquad\qquad\qquad\qquad\left(\frac{(\lambda_3-x)^2-z^2}{z^2}\right)^{k}
\Big((z^2-y^2)((\lambda_1-x)^2-z^2)(z^2-(\lambda_2-x)^2)\Big)^{k-1}dz,
\end{eqnarray*}
if $\max(|y|,|x-\lambda_2|)\leq \min(x-\lambda_3, \lambda_1-x)$ and equal $0$ otherwise.
\end{cor}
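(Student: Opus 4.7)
The plan is to convert the integral formula of Theorem~\ref{th1} into a pure exponential kernel by invoking the Poisson integral representation of the modified Bessel function, then change variables so that all exponentials combine into a single factor $e^{\langle\mu,\nu\rangle}$ with $\nu$ ranging over $\mathbb{V}$. Writing $\mathcal{J}_{k-\frac{1}{2}}(w) = c_k\int_{-1}^1 e^{ws}(1-s^2)^{k-1}\,ds$ with $c_k = \Gamma(2k)/(2^{2k-1}\Gamma(k)^2)$, and the analogous representation for $\mathcal{J}_{k-\frac{1}{2}}'$ with an extra factor $s$, and substituting $w = (\mu_1-\mu_2)(\nu_1-\nu_2)/2$ in (\ref{IF}), turns the two Bessel terms into integrals in an auxiliary parameter $s\in[-1,1]$. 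Combining the Poisson exponential with the factor $e^{(\mu_1+\mu_2-2\mu_3)(\nu_1+\nu_2)/2}$ already present yields $e^{\mu_1 p_1+\mu_2 p_2+\mu_3 p_3}$, where
\[
p_1 = \tfrac{\nu_1(1+s)+\nu_2(1-s)}{2},\quad p_2 = \tfrac{\nu_1(1-s)+\nu_2(1+s)}{2},\quad p_3 = -(\nu_1+\nu_2),
\]
so that $p\in\mathbb{V}$.

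Next, I would change variables from $(\nu_1,\nu_2,s)$ to $(p_1,p_2,z)$ by setting $z=(\nu_1-\nu_2)/2$; the inverse reads $\nu_1 = x+z$, $\nu_2 = x-z$, $s=y/z$ with $x=(p_1+p_2)/2$, $y=(p_1-p_2)/2$, and a direct determinant computation gives Jacobian $z$, so $d\nu_1\,d\nu_2\,ds = z^{-1}\,dp_1\,dp_2\,dz$. Using $z > 0$ one checks that the ranges $\nu_1\in[\lambda_2,\lambda_1]$, $\nu_2\in[\lambda_3,\lambda_2]$, $s\in[-1,1]$ translate into $(p_1,p_2)\in co(\lambda)$ together with $z\in[\max(|y|,|x-\lambda_2|),\,\min(\lambda_1-x,\,x-\lambda_3)]$. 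The remaining polynomial factors reduce to expressions in $(x,y,z)$: using $\lambda_1+\lambda_2+\lambda_3=0$ one gets $\nu_1\nu_2+\tfrac{\lambda_3}{2}(\nu_1+\nu_2)+\lambda_1\lambda_2 = (x-\lambda_1)(x-\lambda_2)-z^2$, $(\lambda_3-\nu_1)(\lambda_3-\nu_2) = (\lambda_3-x)^2-z^2$, and $W_k(\nu,\lambda) = [((\lambda_1-x)^2-z^2)(z^2-(\lambda_2-x)^2)((\lambda_3-x)^2-z^2)]^{k-1}$; together with $(1-s^2)^{k-1}=(z^2-y^2)^{k-1}/z^{2k-2}$ the extra $(\lambda_3-x)^2-z^2$ is absorbed into $((\lambda_3-x)^2-z^2)^k$ and paired with $z^{-2k}$ to produce the characteristic factor $(((\lambda_3-x)^2-z^2)/z^2)^k$ of the corollary.

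The main obstacle is the algebra in the final step: the coefficients $A = 6z(\lambda_1-\lambda_2)$ multiplying $\mathcal{J}_{k-\frac{1}{2}}$ and $B = -6((x-\lambda_1)(x-\lambda_2)-z^2)$ multiplying $\mathcal{J}_{k-\frac{1}{2}}'$, after substitution $s=y/z$ and division by $z$ from the Jacobian, must be combined into the polynomial $3z^2(2y+\lambda_1-\lambda_2)-6y(x-\lambda_1)(x-\lambda_2)$, with prefactor $\Gamma(2k)\Gamma(3k)/(2^{2k-2}\Gamma(k)^5 V(\lambda)^{2k})$. Direct substitution produces a slightly different intermediate bracket, and one should expect an integration by parts in $z$ to redistribute the discrepancy, exploiting the vanishing of $((\lambda_3-x)^2-z^2)^k$ and of $[(z^2-y^2)((\lambda_1-x)^2-z^2)(z^2-(\lambda_2-x)^2)]^{k-1}$ at the respective endpoints of the $z$-interval.
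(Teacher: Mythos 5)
Your overall strategy coincides with the paper's: insert the Poisson representations of $\mathcal{J}_{k-\frac12}$ and $\mathcal{J}_{k-\frac12}'$ into (\ref{IF}), combine the exponentials into a single $e^{\langle\mu,\nu\rangle}$ with $\nu$ in the hyperplane, and read off the domain $co(\lambda)$ and the density $F_k$ after a change of variables. Your bookkeeping is correct and is exactly what the paper does: the Jacobian $z$, the translation of the ranges $\nu_1\in[\lambda_2,\lambda_1]$, $\nu_2\in[\lambda_3,\lambda_2]$, $s\in[-1,1]$ into $(p_1,p_2)\in co(\lambda)$ and $z\in[\max(|y|,|x-\lambda_2|),\min(x-\lambda_3,\lambda_1-x)]$, and the identities for $W_k(\nu,\lambda)$, $(\lambda_3-\nu_1)(\lambda_3-\nu_2)$ and $\nu_1\nu_2+\frac{\lambda_3}{2}(\nu_1+\nu_2)+\lambda_1\lambda_2$ all check out; you have merely packaged as one three-variable substitution what the paper does in two successive two-variable steps with Fubini in between.

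The gap is in your last paragraph. The paper's proof is a \emph{direct} substitution: the bracket $3z^2(2y+\lambda_1-\lambda_2)-6y(x-\lambda_1)(x-\lambda_2)$ is obtained immediately after replacing $\mathcal{J}_{k-\frac12}$ by its $z^{-1}$-weighted kernel and $\mathcal{J}_{k-\frac12}'$ by its $yz^{-2}$-weighted kernel and factoring out $z^{-2k}(z^2-y^2)^{k-1}$; no integration by parts in $z$ occurs anywhere in the argument. You correctly sense that the direct computation yields a slightly different bracket (it produces $6z^2(\lambda_1-\lambda_2)+6yz^2-6y(x-\lambda_1)(x-\lambda_2)$, so the coefficient of $z^2(\lambda_1-\lambda_2)$ comes out as $6$ rather than $3$), but your proposed repair --- an unspecified integration by parts exploiting vanishing at the endpoints of the $z$-interval --- is neither carried out nor viable for all $k>0$: for $0<k<1$ the factor $\bigl((z^2-y^2)((\lambda_1-x)^2-z^2)(z^2-(\lambda_2-x)^2)\bigr)^{k-1}$ \emph{blows up} at the relevant endpoints rather than vanishing, so the boundary terms you would need to discard do not vanish. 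The discrepancy you observed is far more plausibly a factor-of-two slip in the printed constant than something to be absorbed by a further integration; in any case, the recombination of the two Bessel contributions into the stated polynomial is precisely the step that must be verified explicitly, and your proposal leaves it open.
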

\begin{proof}

Recall that
\begin{eqnarray*}
\mathcal{J}_{k-\frac{1}{2}}((\mu_1-\mu_2)z)&=&\frac{\Gamma(2k)}{2^{2k-1}\Gamma(k)^2}\int_{\mathbb{R}}
e^{(\mu_1-\mu_2)y}(1-\frac{y^2}{z^2})^{k-1}\chi_{[-1,1]}(\frac{y}{z})z^{-1}dy,\\
\mathcal{J}_{k-\frac{1}{2}}'((\mu_1-\mu_2)z)&=&\frac{\Gamma(2k)}{2^{2k-1}\Gamma(k)^2}\int_{\mathbb{R}}
e^{(\mu_1-\mu_2)y}(1-\frac{y^2}{z^2})^{k-1}\frac{y}{z^2}\chi_{[-1,1]}(\frac{y}{z})dy.
\end{eqnarray*}

Inserting these into  (\ref{IF}) and making use the change of variables: $$x=\frac{\nu_1+\nu_2}{2},\quad z=\frac{\nu_1-\nu_2}{2},$$
with Fubuni's Theorem,  we obtain
\begin{eqnarray}\label{F1}
E_{k}(\mu,\lambda)= \int_{\mathbb{R} }\int_{\mathbb{R} }
 e^{(\mu_1+\mu_2-2\mu_3)x+ (\mu_1-\mu_2)y} F_k(x,y,\lambda)dxdy
\end{eqnarray}
where
\begin{eqnarray*}
&&F_k(x,y,\lambda)=\\&&
\frac{\Gamma(2k)\Gamma(3k)}{2^{2k-2}\Gamma(k)^5V(\lambda)^{2k}}\int_{\mathbb{R} }\Big( 3z^2(\lambda_1-\lambda_2)-6y(x^2-z^2+\lambda_3x+\lambda_1\lambda_2)\Big)
\left(\frac{(\lambda_3-x)^2-z^2}{z^2}\right)^{k}\\&&
\Big((z^2-y^2)(\lambda_1-x)^2-z^2)(z^2-(\lambda_2-x)^2)\Big)^{k-1}\\&&\qquad\qquad\qquad\qquad\qquad\qquad\qquad
  \chi_{[-1,1]} (\frac{y}{z}) \chi_{[\lambda_1,\lambda_2]}(x+z)\chi_{[\lambda_3,\lambda_2]}(x-z)dz\\
=&&\frac{\Gamma(2k)\Gamma(3k)}{2^{2k-2}\Gamma(k)^5V(\lambda)^{2k}}\int_{\max(|y|,|x-\lambda_2|)}^{\min(x-\lambda_3, \lambda_1-x)}
\Big( 3z^2(2y+\lambda_1-\lambda_2)-6y(x-\lambda_1)(x-\lambda_2)\Big)\\&&
\qquad\qquad\qquad\qquad\qquad\left(\frac{(\lambda_3-x)^2-z^2}{z^2}\right)^{k}
\Big((z^2-y^2)((\lambda_1-x)^2-z^2)(z^2-(\lambda_2-x)^2)\Big)^{k-1}dz
\end{eqnarray*}
where we  used the fact that
\begin{eqnarray*}
\chi_{[-1,1]} \left(\frac{y}{z}\right) \chi_{[\lambda_1,\lambda_2]}(x+z)\chi_{[\lambda_3,\lambda_2]}(x-z)=
\chi_{\max(|y|,|x-\lambda_2|)\leq z\leq \min(x-\lambda_3, \lambda_1-x)}.
\end{eqnarray*}
Now,  the change of variables  $$x=\frac{\nu_1+\nu_2}{2},\quad y=\frac{\nu_1-\nu_2}{2},$$
gives
\begin{eqnarray}\label{F1}
E_{k}(\mu,\lambda)= \int_{\mathbb{R} }\int_{\mathbb{R} }
 e^{\langle\mu,\nu \rangle} F_k\left(\frac{\nu_1+\nu_2}{2},\frac{\nu_1-\nu_2}{2},\lambda\right)d\nu_1\nu_2.
\end{eqnarray}
To  achieve the proof we  use that
$$\left\{\nu\in \mathbb{V}; \quad \max \left(\frac{|\nu_1-\nu_2|}{2},\left| \frac{\nu_1+\nu_2}{2}-\lambda_2\right|\right)\leq \min
\Big(\frac{\nu_1+\nu_2}{2}-\lambda_3,\lambda_1-\frac{\nu_1+\nu_2}{2}\Big)\right\}=co(\lambda).$$

\end{proof}

\end{document}